\DeclareMathOperator*{\relu}{ReLU}
\newtheorem{thm}{Theorem}
\DeclareSIUnit{\cal}{cal}
\DeclareSIUnit{\kcal}{\kilo\cal}
\newcommand{\Ldyn}{L^{dyn}}
\newcommand{\Lorth}{L^{orth}}
\newcommand{\Rd}{\mathbb{R}^d}
\newcommand{\vect}[1]{\mathbf{#1}}
\newcommand{\expt}[1]{\mathbb{E}\left[#1\right]}
\newcommand{\prob}[1]{\mathbb{P}[#1]}
\title{A Data Driven Method for Computing Quasipotentials}
\author[1]{Bo Lin\footnote{Electronic mail: \href{mailto:E0046836@u.nus.edu}{E0046836@u.nus.edu}}}
\author[1,2]{Qianxiao Li\footnote{Corresponding author. Electronic mail: \href{mailto:qianxiao@nus.edu.sg}{qianxiao@nus.edu.sg}}}
\author[1]{Weiqing Ren\footnote{Corresponding author. Electronic mail: \href{mailto:matrw@nus.edu.sg}{matrw@nus.edu.sg}}}
\affil[1]{Department of Mathematics, National University of Singapore, Singapore 119076}
\affil[2]{Institute of High Performance Computing, A*STAR, Singapore 138632}
\date{\today}
\begin{document}

\maketitle

\begin{abstract}
    \noindent The quasipotential is a natural generalization of the concept of energy functions to non-equilibrium systems. In the analysis of rare events in stochastic dynamics, it plays a central role in characterizing the statistics of transition events and the likely transition paths. However, computing the quasipotential is challenging, especially in high dimensional dynamical systems where a global landscape is sought. 
    Traditional methods based on the dynamic programming principle or path space minimization tend to suffer from the curse of dimensionality.
    In this paper, we propose a simple and efficient machine learning method to resolve this problem. The key idea is to learn an orthogonal decomposition of the vector field that drives the dynamics, from which one can identify the quasipotential. We demonstrate on various example systems that our method can effectively compute quasipotential landscapes without requiring spatial discretization or solving path-space optimization problems. Moreover, the method is purely data driven in the sense that only observed trajectories of the dynamics are required for the computation of the quasipotential. These properties make it a promising method to enable the general application of quasipotential analysis to dynamical systems away from equilibrium. 
    
    \smallskip
    \noindent \textbf{Keywords.}  Non-equilibrium Systems, Quasipotential, Machine Learning, Rare Events, Hamilton-Jacobi Equations
\end{abstract}

\section{Introduction}
Dynamical systems under the influence of random perturbations are widely used in scientific modelling, including nucleation events during phase transitions, chemical reactions and biological networks. For these systems, understanding the mechanism and statistics of transitions between stable states is of great interest, especially when the noise has very small amplitude.
According to large deviation theory~\cite{freidlin2012random}, 
the transition dynamics become predictable in the small noise limit, and is completely characterized by the \emph{quasipotential}. The latter generalizes the notion of equilibrium potential to non-equilibrium systems. Consequently, the quasipotential landscape gives an intuitive description of the essential dynamical features of complex systems that are out of equilibrium~\cite{zhou2016construction,lv2015energy,nolting2016balls,lv2014constructing}. %\QL{We need some more citations here?}

% \QL{We should first say computing the quasipotential is a challenging problem, and then talk about these methods.} 
However, computing quasipotentials is a challenging problem, especially when the system is high dimensional, or when a global landscape is sought. 
To date, there are two classes of methods for computing the quasipotential. The first type relies on the variational formulation of the quasipotential based on the Freidlin-Wentzell action functional~\cite{weinan2004minimum,zhou2008adaptive,heymann2008geometric}. Here, the value of the quasipotential with respect to two chosen points is computed based on the solution of a path-space minimization problem. These methods have the advantage that they can handle high-dimensional systems, and moreover, a most likely transition path is identified together with the computation. However, the key disadvantage is the behavior of the quasipotential away from the chosen points (and a minimum action path connecting them) remains unknown. In particular, computing a quasipotential landscape is prohibitively expensive using such methods. 
% The second class of methods build on a partial differential equation (PDE) formulation of the quasipotential, where it can be shown that the quasipotential satisfies a Hamilton-Jacobi equation~\cite{freidlin2012random}. This leads to numerical methods based on the solution of Hamiltonian-Jacobi equations on meshes~\cite{cameron2012finding,dahiya2018ordered,yang2019computing}. 
The second class of methods is developed to compute the quasipotential on 2D or 3D meshes. These methods are based on the dynamic programming principle. At each step, the estimated quasipotential values at selected spatial points are updated by solving the associated Hamilton-Jacobi equation~\cite{cameron2012finding} or directly solving the action minimization problem locally~\cite{dahiya2018ordered,dahiya2018ordered2,yang2019computing}.
Contrasting with previous variational approaches in path space, these methods compute an entire quasipotential landscape and do not require {\textit{a priori}} information to select special points of interest. However, due to the requirement of a discretization mesh, they are limited to low dimensional system, as the computational complexity and cost grow exponentially over dimensions.

In practical applications, it is often the case that we need to analyze transition events or asymptotic occupational probabilities in high dimensional spaces, e.g. applications in biological networks~\cite{lv2015energy,li2018landscape,wang2010potential}. For such computations, the quasipotential is a very useful object.
% {\color{red}For instance, [SOME REFS] the quasipotential landscape can be used to quantify the stability and robustness of biological networks~\cite{lv2015energy}.}
Thus, it is of importance to develop a method that can effectively address the previously mentioned limitations. In this paper, we introduce a machine learning based method for computing quasipotential landscapes. The method is not only scalable to high dimensions, but also yields the entire quasipotential landscape. Moreover, it has the advantage that no explicit dynamical models are required, and the quasipotential can be constructed directly from sampled trajectory data. In fact, this method simultaneously learns the force field of the dynamical system from the trajectories. This makes the computation of quasipotential landscapes, thus the analysis of rare events, for practical applications a much more tractable task. 

The paper is organized as follows. We first introduce some theoretical background in Section~\ref{Background} and then propose the machine learning based method in Section~\ref{Methods}, including parameterization of the orthogonal decomposition of the vector field and the loss function. 
In Section~\ref{Numerical_Examples}, we illustrate the proposed method on several numerical examples. Finally we draw the conclusions in Section~\ref{Conclusion}.

\section{Background}\label{Background}
We consider a dynamical system driven by small white noise. Its evolution is described by the stochastic differential equation
\begin{equation}\label{SDE}
d\vect{x} = \vect{f}(\vect{x}) dt + \sqrt{\epsilon} d\vect{W},\quad \vect{x}=(x_1,\dots,x_D)\in\Rd,
\end{equation}
where $\vect{f}:\Rd \rightarrow \Rd$ is a continuously differentiable vector field, $\vect{W}$ is the standard Brownian motion and $\epsilon$ is a small parameter, typically identified as a scaled temperature.
For a given continuous path $\bm\varphi(t) \in \Rd$ on the time interval $t\in [0,T]$, the  Freidlin-Wentzell action functional of the path associated with the system is defined as
\begin{equation}
\mathcal{A}[\bm\varphi(\cdot);T] = \int_{0}^{T} \frac{1}{2}\left\lvert \dot{\bm{\varphi}}-\vect{f}(\bm\varphi)\right\rvert^2 dt.
\end{equation}
Denote by $\vect{x}^{\epsilon}(t)$ the trajectory of the system~\eqref{SDE} starting from $\bm\varphi(0)$. The Freidlin-Wentzell theory tells that for sufficiently small $\epsilon$, $\delta$, the probability that $\vect{x}^{\epsilon}(t)$ stays in the neighborhood of the path $\bm\varphi(t)$ on the time interval $[0,T]$ can be estimated by
\begin{equation}
\prob{\sup_{0\leq t\leq T}\left\lvert \vect{x}^{\epsilon}(t)-\bm\varphi(t)\right\rvert<\delta} \approx \exp(-\frac{1}{\epsilon}\mathcal{A}[\bm\varphi(\cdot);T]).
\end{equation}
We assume that the deterministic dynamical system $\dot{\vect{x}}=\vect{f}(\vect{x})$ exhibits a finite number of stable equilibria or limit cycles, such that almost every trajectory of the system is asymptotically convergent to those isolated attractors. Let $A$ be one of the attractors. The quasipotential at the state $\vect{x}$ with respect to the attractor $A$ is defined as
\begin{equation}\label{QP}
U_A(\vect{x}) = \inf_{T>0}\inf_{\bm{\varphi}} \mathcal{A}[\bm\varphi(\cdot);T],
\end{equation}
where the infimum of the action functional is taken over all time horizon $T>0$ and all absolutely continuous paths $\bm{\varphi}$ connecting the attractor $A$ and the state $\vect{x}$, {\it i.e.} $\bm\varphi(0)\in A$ and $\bm\varphi(T)=\vect{x}$. The quasipotential with respect to the attractor $A$ describes the difficulty of exiting the basin of $A$ for the system \eqref{SDE} when the strength of noise $\epsilon$ is small. According to the large deviation theory~\cite{freidlin2012random}, the statistics of the escaping event from the attractor $A$ can be estimated using the quasipotential. 
For instance, the maximum likelihood path from $A$ to another attractor is characterized by the quasipotential - the tangent of the path is parallel to $\vect{f}+\nabla U_A$ along the path. Also, the expected exit time $\tau$ from the attractor $A$ is determined by the minimum of the  quasipotential on the boundary of the basin of $A$: $\lim_{\epsilon\rightarrow0}\epsilon\log\expt{\tau}=\min_{\vect{x}\in\partial\mathcal{B}(A)}U_A(\vect{x})$, where $\mathcal{B}(A)$ is the basin of the attractor $A$.

The central idea of our approach relies on an alternative characterization of the quasipotential through an orthogonal decomposition of the vector field. Suppose $\vect{f}$ can be decomposed as
\begin{equation}\label{form}
\vect{f}(\vect{x})=-\nabla V(\vect{x})+\vect{g}(\vect{x}), \quad \text{with } 
\nabla V(\vect{x})^T\vect{g}(\vect{x})=0,
\end{equation}
where the term $-\nabla V(\vect{x})$ is referred to as the potential component of $\vect{f}(\vect{x})$ and $\vect{g}(\vect{x})$ as the rotational component. It is proved in the following theorem that under certain conditions, $2V$ coincides with the quasipotential of system~\eqref{SDE} up to an additive constant. 
\begin{thm}\label{thm1}
Suppose the vector field $\vect{f}$ in the system~\eqref{SDE} has the orthogonal decomposition~\eqref{form} and $V$ attains its strict local minimum at a point or limit cycle, denoted by $A$. 
If there is a bounded domain $\mathcal{D}$ containing $A$ such that
	\begin{itemize}
		\item $V$ is continuously differentiable in $\mathcal{D}\cup \partial \mathcal{D}$;
		\item $V(\vect{x})>V(A)$ and $\nabla V(\vect{x})\neq 0$ for all $\vect{x}\in \mathcal{D}\cup \partial \mathcal{D}$ and $\vect{x}\notin A$,
	\end{itemize}
then the quasipotential of the system~\eqref{SDE} with respect to the attractor $A$ in the set $\{\vect{x}\in \mathcal{D}\cup \partial \mathcal{D}: V(\vect{x})\leq\min_{\vect{y}\in\partial\mathcal{D}} V(\vect{y})\}$ coincides with $2V(\vect{x})$ up to an additive constant.
\end{thm}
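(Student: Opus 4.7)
The plan is to establish the identity $U_A(\vect{x}) = 2V(\vect{x}) - 2V(A)$ (up to an additive constant that is absorbed in the quasipotential's definition) by a matched pair of inequalities, exploiting the key algebraic identity that the orthogonal decomposition enables. Concretely, for any smooth path $\bm\varphi$, using $\vect{f} = -\nabla V + \vect{g}$ and $\nabla V^T \vect{g} = 0$, I would expand
\begin{equation*}
\tfrac{1}{2}|\dot{\bm\varphi} - \vect{f}|^2 = \tfrac{1}{2}|\dot{\bm\varphi} + \nabla V - \vect{g}|^2 = \tfrac{1}{2}|\dot{\bm\varphi} - \nabla V - \vect{g}|^2 + 2\, \dot{\bm\varphi}\!\cdot\!\nabla V - 2\, \nabla V \!\cdot\! \vect{g},
\end{equation*}
where the last term vanishes by orthogonality. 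This decomposition is the workhorse of the entire proof.

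For the \emph{lower bound}, integrating the above identity over $[0,T]$ for any absolutely continuous path $\bm\varphi$ with $\bm\varphi(0) \in A$ and $\bm\varphi(T) = \vect{x}$ that stays in $\mathcal{D}$ gives
\begin{equation*}
\mathcal{A}[\bm\varphi;T] \;\geq\; 2\int_0^T \dot{\bm\varphi}\!\cdot\!\nabla V \, dt \;=\; 2\bigl(V(\vect{x}) - V(A)\bigr),
\end{equation*}
because the squared term is non-negative. For paths that exit $\mathcal{D}$, I would argue that letting $t_1$ be the first exit time, the partial action on $[0,t_1]$ already exceeds $2(\min_{\partial\mathcal{D}} V - V(A)) \geq 2(V(\vect{x}) - V(A))$ by the sublevel-set restriction, and the remainder of the action is non-negative. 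Taking the infimum over all $T$ and $\bm\varphi$ yields $U_A(\vect{x}) \geq 2V(\vect{x}) - 2V(A)$.

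For the \emph{upper bound}, I would construct a candidate path by following the time-reversed dynamics $\dot{\bm\varphi} = \nabla V(\bm\varphi) + \vect{g}(\bm\varphi)$ issued (backwards in time, as a limit) from $A$ and arriving at $\vect{x}$. Along such a path, the orthogonality gives $\dot{V}(\bm\varphi) = |\nabla V|^2 \geq 0$ (so $V$ increases, and by the hypothesis $\nabla V \neq 0$ off $A$ this motion is non-degenerate), while $\dot{\bm\varphi} - \vect{f} = 2\nabla V$, so
\begin{equation*}
\mathcal{A}[\bm\varphi;T] \;=\; \int_0^T 2|\nabla V|^2 \, dt \;=\; 2\int_0^T \dot{V}(\bm\varphi) \, dt \;=\; 2\bigl(V(\bm\varphi(T)) - V(\bm\varphi(0))\bigr),
\end{equation*}
which tends to $2V(\vect{x}) - 2V(A)$ along the limiting trajectory. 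The sublevel-set restriction is exactly what is needed to guarantee that this reversed-flow trajectory remains inside $\mathcal{D}\cup\partial\mathcal{D}$, so that the decomposition is valid along the whole path.

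The main obstacle I foresee is not the algebraic identity but rather the \emph{existence} and \emph{admissibility} of the upper-bound trajectory, especially when the attractor $A$ is a limit cycle rather than a point: one must show that an orbit of $\dot{\bm\varphi} = \nabla V + \vect{g}$ can be initialized arbitrarily close to $A$ (or at a suitable point on $A$) and asymptotically reaches $\vect{x}$, that the infinite-time limit of the action is well-defined, and that the infimum in~\eqref{QP} over finite $T$ is attained as a limit of such orbits. This requires careful use of the hypotheses that $V$ attains its strict local minimum at $A$, that $\nabla V \neq 0$ away from $A$, and that $\vect{x}$ lies in a sublevel set confined to $\mathcal{D}$, together with a truncation/approximation argument for paths whose initial time approaches $-\infty$.
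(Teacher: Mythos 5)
Your proposal is sound and is essentially the classical Freidlin--Wentzell argument that the paper itself does not reproduce (its ``proof'' is only a citation to Ref.~\cite{freidlin2012random}). The algebraic identity, the lower bound via $\mathcal{A}[\bm\varphi;T]\geq 2\int_0^T \dot{\bm\varphi}\cdot\nabla V\,dt = 2(V(\vect{x})-V(A))$ together with the first-exit-time argument explaining the sublevel-set restriction, and the upper bound via the reversed flow $\dot{\bm\varphi}=\nabla V+\vect{g}$ with $\dot{\bm\varphi}-\vect{f}=2\nabla V$ are all correct. The one obstacle you flag is real but resolvable with the stated hypotheses: flowing \emph{backward} from $\vect{x}$ under $\dot{\bm\varphi}=\nabla V+\vect{g}$, $V$ strictly decreases (since $\dot V=\lvert\nabla V\rvert^2>0$ off $A$), so the backward orbit is trapped in the compact set $\{V\leq V(\vect{x})\}\cap(\mathcal{D}\cup\partial\mathcal{D})$, which cannot meet $\partial\mathcal{D}$ except possibly at $t=0$; LaSalle's invariance principle in reverse time then forces the $\alpha$-limit set into $\{\nabla V=0\}=A$, and one closes the gap by concatenating a short segment from $A$ to a point of the orbit at distance $\eta$ from $A$, whose action is $O(\eta)$, then letting $\eta\to 0$. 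With that step filled in, your outline is a complete and correct proof.
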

\begin{proof}
    See Ref.~\cite{freidlin2012random}.
\end{proof}
For the system with multiple attractors, each attractor corresponds to a local quasipotential. These local quasipotential can be used to construct the global quasipotential~\cite{freidlin2012random,zhou2016construction,bouchet2016perturbative}. The global quasipotential is related to the invariant measure of the dynamical system when the noise is small: $\lim_{\epsilon\rightarrow 0}\epsilon\log p_{\infty}(\vect{x}) =-U(\vect{x})$, where $p_{\infty}(\vect{x})$ is the steady-state probability distribution of the system.

\section{Methods}\label{Methods}
We construct the quasipotential based on the orthogonal decomposition~\eqref{form}, where the potential and rotational components are parameterized by neural networks. 
For systems with multiple attractors, we use a single neural network for the potential component and a single neural network for the rotational component across the whole domain of interest. The local quasipotential with respect to each attractor can be obtained by confining the parameterized function to the corresponding basin of attraction.

Once we have a suitable parameterization of $V$ and $\vect{g}$, they can then be trained by minimizing a loss function over the trajectory data from the deterministic system
\begin{equation}\label{det_sys}
    \dot{\vect{x}} = \vect{f}(\vect{x}) = -\nabla V(\vect{x}) + \vect{g}(\vect{x}).
\end{equation} 
The loss function is designed to reconstruct the dynamics of the original system~\eqref{det_sys} and to impose the orthogonality condition between the potential and rotational components.
\subsection{Parameterization of the Orthogonal Decomposition}
The function $V$ is parameterized by the sum of a neural network and a quadratic function,
\begin{equation}\label{par_V}
V_{\theta}(\vect{x}) = \hat{V}_{\theta}(\vect{x}) +  \lvert\vect{x}\rvert^2,
\end{equation}
where the activation function of the network $\hat{V}_{\theta}$ is taken as the hyperbolic tangent function.
The rotational component $\vect{g}$ is parameterized by a neural network $\vect{g}_{\theta}$ with continuously differentiable activation (e.g. $\tanh(z)$ or $\relu^2(z)$~\cite{li2019better}). Therefore, the parameterized vector field is
\begin{equation}\label{par_f}
    \vect{f}_{\theta}(\vect{x}) = -\nabla V_{\theta}(\vect{x}) + \vect{g}_{\theta}(\vect{x}).
\end{equation}
The neural networks $V_{\theta}(\vect{x})$ and $\vect{g}_{\theta}(\vect{x})$ are constructed to obey the following properties:
\begin{itemize}
    \item[(i)] $V_{\theta}$ is real analytic;
    \item[(ii)] Both $V_{\theta}$ and $\lvert\nabla V_{\theta}\rvert$ are radially unbounded, i.e. $V_{\theta}(\vect{x})\rightarrow\infty$ and 
         $\lvert\nabla V_{\theta}(\vect{x})\rvert\rightarrow\infty$, 
         as {\small $\lvert\vect{x}\rvert\rightarrow\infty$};
    % \begin{center}
    %      $V_{\theta}(\vect{x})\rightarrow\infty$ and
    %      $\lvert\nabla V_{\theta}(\vect{x})\rvert\rightarrow\infty$, 
    %      as $\lvert\vect{x}\rvert\rightarrow\infty$;
    % \end{center}
    \item[(iii)] $\vect{g}_{\theta}$ is continuously differentiable.
\end{itemize}
The following theorem shows that under the above three conditions, the set $\{\vect{x}\in \Rd:\nabla V_{\theta}(\vect{x})=0\}$ is bounded and has Lebesgue measure zero in $\Rd$, and the learned dynamics $\dot{\vect{x}} = \vect{f}_{\theta}(\vect{x})$ is stable with respect to this set.
Hence, any dynamics parameterized as such enjoys good stability properties, and are suitable candidates to model physical systems.

\begin{thm} \label{thm2}
Let $\vect{f}(\vect{x})=-\nabla V(\vect{x}) + \vect{g}(\vect{x})$ where $\nabla V(\vect{x})^T\vect{g}(\vect{x})=0$ and $V$, $\vect{g}$ satisfy the conditions (i),(ii),(iii). Then any trajectory $\{\vect{x}(t)\}_{t\geq0}$ of the system $\dot{\vect{x}} = \vect{f}(\vect{x})$ approaches the bounded measure-zero set $\mathcal{C}:=\{\vect{x}\in \Rd:\nabla V(\vect{x})=0\}$ as $t\rightarrow\infty$, {\it i.e.}
	\begin{equation}
	    \lim_{t\rightarrow\infty} \inf_{\vect{y}\in\mathcal{C}}
	    \lvert \vect{x}(t)-\vect{y}\rvert=0.
	\end{equation}
	%If $\vect{f}$ has the orthogonal form~\eqref{form} where $\vect{g}$ is continuously differentiable, $V$ is real analytic and both $V$ and $\nabla V$ are radially unbounded, then every trajectory $\vect{x}(t)$ of the system~\eqref{det_sys} is bounded and approaching the bounded measure-zero set $\{\vect{x}\in \Rd:\nabla V(\vect{x})=0\}$ in $\Rd$, as $t\rightarrow\infty$. That is
% 	\begin{equation}
% 	    \lim_{t\rightarrow\infty} \inf_{\nabla V(\vect{y})=0}
% 	    \lvert \vect{x}(t)-\vect{y}\rvert=0.
% 	\end{equation}
\end{thm}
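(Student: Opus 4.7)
The plan is to use $V$ as a Lyapunov function and combine the Lyapunov argument with LaSalle's invariance principle. The orthogonality $\nabla V^T\vect{g}=0$ gives the crucial identity
\begin{equation*}
\frac{d}{dt} V(\vect{x}(t)) = \nabla V(\vect{x}(t))^T \vect{f}(\vect{x}(t)) = -\lvert\nabla V(\vect{x}(t))\rvert^2 + \nabla V(\vect{x}(t))^T \vect{g}(\vect{x}(t)) = -\lvert\nabla V(\vect{x}(t))\rvert^2 \leq 0,
\end{equation*}
so $V$ is non-increasing along any trajectory of $\dot{\vect{x}}=\vect{f}(\vect{x})$. From property (iii) and the smoothness of $V$ implied by (i), the vector field $\vect{f}$ is continuously differentiable, so solutions exist and are unique locally.

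Next I would establish that $\mathcal{C}$ is bounded and has Lebesgue measure zero. Boundedness follows immediately from (ii): since $\lvert\nabla V(\vect{x})\rvert\to\infty$ as $\lvert\vect{x}\rvert\to\infty$, there exists $R>0$ with $\lvert\nabla V(\vect{x})\rvert\geq 1$ for $\lvert\vect{x}\rvert>R$, forcing $\mathcal{C}\subset\overline{B_R(0)}$. For the measure-zero statement I would invoke the classical fact that the zero set of a non-trivial real analytic function on $\Rd$ has Lebesgue measure zero. The function $\lvert\nabla V\rvert^2=\sum_i(\partial_i V)^2$ is real analytic by (i), and it is not identically zero: otherwise $V$ would be constant, contradicting the radial unboundedness of $V$ in (ii). Hence $\mathcal{C}=\{\lvert\nabla V\rvert^2=0\}$ is a null set.

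For the convergence claim, I would show first that every trajectory is bounded, then apply LaSalle. Since $V(\vect{x}(t))\leq V(\vect{x}(0))$ and $V$ is radially unbounded, the trajectory remains inside the compact sublevel set $\{V\leq V(\vect{x}(0))\}$, so it has a non-empty $\omega$-limit set $\Omega$. Because $V$ is continuous and $V\circ\vect{x}(\cdot)$ is non-increasing and bounded below on a compact set, $V(\vect{x}(t))$ converges to some $V_\infty$, which forces $V\equiv V_\infty$ on $\Omega$. Invariance of $\Omega$ then means that for any $\vect{y}\in\Omega$ the solution through $\vect{y}$ stays in $\Omega$, so $\frac{d}{dt}V=-\lvert\nabla V\rvert^2=0$ along it, giving $\Omega\subset\mathcal{C}$. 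Standard $\omega$-limit set properties then yield $\inf_{\vect{y}\in\mathcal{C}}\lvert\vect{x}(t)-\vect{y}\rvert\to 0$.

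The main obstacle I anticipate is the measure-zero step: one must carefully cite or sketch the real analyticity argument (for example, via a Mityagin-type induction on dimension using the fact that a non-zero analytic function on $\mathbb{R}$ has only isolated zeros) rather than relying on the smooth Sard-type reasoning, which would only give that the critical \emph{values} have measure zero, not the critical set itself. Once this is in hand, the Lyapunov/LaSalle portion is routine given the explicit Lyapunov identity provided by the orthogonality condition.
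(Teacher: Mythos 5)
Your proposal is correct and follows essentially the same route as the paper's proof: the orthogonality condition makes $V$ a Lyapunov function with $\dot V=-\lvert\nabla V\rvert^2$, radial unboundedness of $V$ confines the trajectory to a compact sublevel set, LaSalle's invariance principle gives convergence to $\mathcal{C}$, and real analyticity plus radial unboundedness shows $\mathcal{C}$ is a bounded null set. The only cosmetic difference is that you apply the analytic-zero-set fact to $\lvert\nabla V\rvert^2$ while the paper applies it to each partial derivative $\partial_i V$ (each nontrivial since $V$ is radially unbounded); both are valid.
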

\begin{proof}
As $V$ is real analytic, all partial derivatives of $V$ are also real analytic. Since $V$ is radially unbounded, the zero sets of these partial derivatives are all measure-zero in $\Rd$. Thus, the set $\mathcal{C}$ has measure of zero in $\Rd$. Furthermore, $\lvert \nabla V(\vect{x})\rvert\rightarrow \infty$, as $\lvert \vect{x}\rvert\rightarrow\infty$, which implies that $\mathcal{C}$ is also bounded.

For any trajectory $\{\vect{x}(t)\}_{t\geq0}$ of the system $\dot{\vect{x}} = \vect{f}(\vect{x})$, we have
\begin{equation}
\frac{dV(\vect{x}(t))}{dt} 
= \nabla V(\vect{x}(t))\cdot\vect{f}(\vect{x}(t)) = -\lvert\nabla V(\vect{x}(t))\rvert^2\leq 0.
\end{equation}
Therefore $V$ is the Lyapunov function of this system and we have $V(\vect{x}(t))\leq V(\vect{x}(0))$, for all $t$. 
Furthermore, since $V$ is radially unbounded, the sub-level set
\begin{equation}
    S_0=\{\vect{x}\in \Rd: V(\vect{x})\leq V(x(0))\}
\end{equation} 
is bounded. The trajectory $\{\vect{x}(t)\}_{t\geq0}$ is contained in the bounded set $S_0$. By Lasalle's theorem~\cite{lasalle1960some}, the trajectory $\{\vect{x}(t)\}_{t\geq0}$ approaches the set $\mathcal{C}$ as $t\rightarrow\infty$.
\end{proof}

\paragraph{Remark.}
\textit{Incidentally, the data-driven nature of our method also gives a way to learn stable and interpretable dynamical systems from trajectory data, as shown in Theorem~\ref{thm2}.
Up to this paper, a large amount of efforts have been devoted to the various data-driven methods for system identification in two main directions. One is to learn closed form equations with some prior knowledge on the underlying mechanism. Related methods include Kronecker product representations~\cite{yao2007modeling}, sparse identification of nonlinear dynamics~\cite{brunton2016discovering}, Gaussian processes~\cite{raissi2018hidden} and PDE-net~\cite{long2019pde}. The other direction employs black box methods to learn a model with better accuracy in the prediction.
These methods exploit the expressive power of deep neural networks~\cite{raissi2018multistep,raissi2018deep}, which could potentially learn more complicated models of the nonlinear dynamical systems. However, stability and interpretability is not generally ensured.
The attempts to balance expressive power and physical relevance is investigated in~\cite{yu2020onsagernet,kolter2019learning}. The current method falls into this category, in that stability is ensured by construction, and subsequent flexibility is introduced via neural network approximation.}

\subsection{Loss Function}
Once we have parameterized $V_{\theta}$ and $\vect{g}_{\theta}$, it remains to define a suitable loss function over the data in order to train them to ensure reconstruction ($\vect{f} \approx - \nabla V_{\theta} + \vect{g}_{\theta}$) and orthogonality ($\nabla V_{\theta}^T \vect{g}_{\theta} \approx 0$).

The observation dataset $X=\{X_i(t_j),X_i(t_j+\Delta t):\ i=1,\dots,N,\ j=0,\dots,M-1\}$ contains $N$ trajectories of the deterministic system~\eqref{det_sys} where $X_i(t)$ denotes the $i^\text{th}$ trajectory. Along each trajectory, $2M+2$ data points are sampled at the times
\begin{equation}\label{times}
t_0,t_0+\Delta t,t_1,t_1+\Delta t,...,t_M,t_M+\Delta t,
\end{equation}
where $t_0<t_1<...<t_M$ and $\Delta t$ is a small time step. The loss function consists of two parts
\begin{equation}
L = \Ldyn +  \lambda \Lorth,
\end{equation}
where $\Ldyn$ is to reconstruct the dynamics in~\eqref{det_sys}, $\Lorth$ is to impose the orthogonality condition $\nabla V_{\theta}(\vect{x})^T\vect{g}_{\theta}(\vect{x})=0$, and $\lambda$ is a parameter. 

The term $\Ldyn$ depends on the difference between the learned dynamics and the observed trajectories,
\begin{equation}
\begin{aligned}
\Ldyn &= \frac{1}{N(M+1)}\sum_{i=1}^N \sum_{j=0}^{M}
\bar{h}\left( \vect{e}_{ij} ;\delta_1\right),\\
\vect{e}_{ij} &= \frac{1}{\Delta t}\left(\mathcal{I}_{\Delta t}[\vect{f}_{\theta};X_i(t_j)]-X_i(t_j+\Delta t)\right),
\end{aligned}
\end{equation}
where $\mathcal{I}_{\Delta t}[\vect{f}_{\theta};X_i(t_j)]$ is the state obtained by performing the numerical integration of the learned dynamics $\dot{\vect{x}}=\vect{f}_{\theta}(\vect{x})$ by one time step $\Delta t$ from the state $X_i(t_j)$, and $\bar{h}(\vect{e};\delta_1)$ denotes the mean Huber loss of the vector $\vect{e}=(e_1,\dots,e_d)$ with threshold $\delta_1$,
\begin{equation}
\begin{aligned}
\bar{h}(\vect{e};\delta_1) &= \frac{1}{d}\sum_{i=1}^{d} h(e_i;\delta_1),\\
h(e_i;\delta_1) &= \left\{
\begin{array}{ll}
\frac{1}{2} e_i^2, & \lvert e_i\rvert<\delta_1,\\
\delta_1\lvert e_i\rvert -\frac{1}{2}\delta_1^2, & \text{otherwise}. \\
\end{array} \right.
\end{aligned}
\end{equation} 
The Huber loss reduces the dominating effect of large components in the vector $\vect{e}$.

The orthogonality between $\nabla V_{\theta}$ and $\vect{g}_{\theta}$ is imposed by the penalty term $\lambda\Lorth$ with 
\begin{equation}
\Lorth =  \frac{1}{S}\sum_{i=1}^S  w\left(\frac{\nabla V_{\theta}(\tilde{X}_i)^T \vect{g}_{\theta}(\tilde{X}_i) }{\lvert \nabla V_{\theta}(\tilde{X}_i)\rvert \cdot\lvert\vect{g}_{\theta}(\tilde{X}_i)\rvert};\delta_2\right),
\end{equation}
where $w(y;\delta_2)=y^2I_{y>0}+ \delta_2 y^2I_{y<0}$, $\delta_2$ is a parameter and
$\tilde{X}_1,\dots,\tilde{X}_S$ are representative data points sampled from $X$ by using Algorithm~\ref{alg1}. The representative data points are chosen such that each of them covers a ball of radius $r$ and no other representative data points lie inside this ball. This sampling procedure avoids the situation where points in the trajectories are clumped together near attractors, where the orthogonality condition is difficult to enforce numerically.
\begin{algorithm}
	\caption{Sampling the representative dataset}
	\label{alg1}
	\begin{algorithmic}[1]
		\Function{GetXhat}{$X$, $r$}
		\State Initialize the sets $Y=X$ and $\tilde{X}=\emptyset$
		\State\textbf{while} $Y\neq\emptyset$ \textbf{do}
		\State\hspace{13pt} Randomly select $\vect{x}\in Y$ and append $\vect{x}$ to the set $\tilde{X}$
		\State\hspace{13pt} Delete all the points belonging to the ball $B_{r}(\vect{x})$ from $Y$
		\State\textbf{end while}
		\State Return $\tilde{X}$
		\EndFunction
	\end{algorithmic}
\end{algorithm}
% \begin{algorithm}
% 	\caption{Sampling the representative dataset \QL{Fix format}}
% 	\label{alg1}
% 	\begin{algorithmic}[1]
% 		\Function{GetXhat}{$X$, $r$}
% 		\State Initialize the sets $Y=X$ and $\tilde{X}=\emptyset$
% 		\While {$Y\neq\emptyset$}
% 		\State Randomly select $\vect{x}\in Y$ and append $\vect{x}$ to the set $\tilde{X}$
% 		\State Delete all the points belonging to the ball $B_{r}(\vect{x})$ from $Y$
% 		\EndWhile
% 		\State Return $\tilde{X}$
% 		\EndFunction
% 	\end{algorithmic}
% \end{algorithm}

\section{Numerical Examples}\label{Numerical_Examples}

We now illustrate using various numerical examples that the proposed method can efficiently compute the quasipotential and at the same time learn stable dynamics.
Section~\ref{example12} contains two ODE systems: one with two stable equilibrium points and the other with a limit cycle. The quasipotentials are known in these two examples, and we use these exact solutions to benchmark the numerical method.
Section~\ref{example3} is a biological system which models the reproduction process of a budding yeast cell cycle.
Section~\ref{example45} contains two high-dimensional systems which are obtained from the discretization of partial differential equations (PDEs).

In the examples, we generate trajectories by simulating the deterministic dynamics in Eq.~\eqref{det_sys} using the forth-order Runge-Kutta method with the time step $\Delta t$ on the time interval $[0,T]$. The initial states are randomly sampled from certain distributions which will be specified in the examples. From these trajectories, we obtain the dataset $X$ by collecting the data points at the times $t_j=j m\Delta t$ and $t_j+\Delta t$ where $j=0,1,\dots,M$ and $m$ is some positive integer. The set of trajectories is split into three parts: $70\%$ (training), $20\%$ (validation) and $10\%$ (test). The representative datasets are sampled from these three datasets respectively using Algorithm~\ref{alg1} with various choices of the parameter $r$. The parameters $\Delta t$, $T$, $m$, $r$ and the number of trajectories $N$ are given in Table~\ref{tab1}.
\begin{table}[h]
	\caption{: Parameters in the numerical examples.}
	\label{tab1}
	\begin{center}
		\begin{tabular}{ ccccc ccc c }
			\hline\hline\vspace{-0.25cm}\\
			Example & $N$ & $\Delta t$ & $T$  & $m$ & \makecell{$r$} & $\delta_1$ & $\lambda$  &
			\makecell{ \# nodes in each \\ hidden layer} \vspace{0.1cm}\\
			\hline\hline \vspace{-0.25cm}\\
			1 & $2\times10^3$ & $10^{-2}$ &$5$ & $10$ & $0.1$ & $1$ & $1$  & $50$ \vspace{0.1cm}\\
			\hline \vspace{-0.25cm}\\
			2 & $2\times10^3$ & $10^{-2}$ &$5$ & $10$ & $0.05$ & $1$  & $0.02$  & $50$ \vspace{0.1cm}\\
			\hline \vspace{-0.25cm}\\
			3 & $1\times10^4$ & $10^{-2}$ &$50$ & $100$ & $0.1$ & $1$  & $0.005$  & $100$  \vspace{0.1cm}\\
			\hline \vspace{-0.25cm}\\
			4 & $1\times10^4$ & $10^{-3}$ &$2$ & $20$ & $0.2$ & $1$ & $1$  & $100$ \vspace{0.1cm}\\
			\hline \vspace{-0.25cm}\\
			5 & $2\times10^4$ & $10^{-4}$ &$2$ & $200$ & $0.2$ &$1$ & $0.1$ & $200$ \vspace{0.1cm}\\
			\hline
		\end{tabular}
		% \end{ruledtabular}
	\end{center}
\end{table}

The networks $\hat{V}_{\theta}$, $\vect{g}_{\theta}$ for the potential and rotational components in the parameterized vector field~\eqref{par_f} are both taken as fully connected neural networks of $2$ hidden layers with the same number of nodes in each hidden layer.
The nonlinear activation function in $\hat{V}_{\theta}$ is $\tanh$ in all the examples, and the activation function in $\vect{g}_{\theta}$ is $\tanh$ in Examples 1-3 and $\relu^2$ in Examples 4-5.
The input to the parameterized vector field $\vect{f}_{\theta}$ is centered so that the centered data points have mean-zero.

In the loss function, we use the second-order Runge-Kutta method as the numerical integrator $\mathcal{I}$ and set $\delta_2=\frac{1}{10}$.
The two parameters $\delta_1$, $\lambda$ are chosen so that the orthogonality error $\Lorth$ and the error of the predicted long-term dynamics over the test dataset are both small. To quantify the accuracy of the predicted long-term dynamics, we solve the learned dynamics
\begin{equation}\label{learned_dym}
\dot{\vect{x}}_{\theta} = -\nabla V_{\theta}(\vect{x}_{\theta}) + \vect{g}_{\theta}(\vect{x}_{\theta})
\end{equation}
using the second-order Runge-Kutta method on the time interval $[0,T]$, and compare the solution with the original dynamics:
% \begin{equation}\label{error}
% %\epsilon = \left(\frac{1}{T'}\int_{0}^{T'} \lVert \vect{x}^{\theta}_i(t) - \vect{x}_i(t) \rVert_2^2 dt\right)^{1/2}.
% \epsilon = \left( \frac{1}{M} \sum_{j=1}^M (\vect{x}_{\theta}(t_j)-\vect{x}(t_j))^2 \right)^{1/2},
% \end{equation}
\begin{equation}\label{error}
\epsilon = 
\frac{
    \sqrt{ \sum_{j=1}^M \left\lvert\vect{x}_{\theta}(t_j)-\vect{x}(t_j)\right\rvert^2 }
}
{
    \sqrt{ \sum_{j=1}^M \left\lvert\vect{x}(t_j)\right\rvert^2}
},
\end{equation}
where $\vect{x}(t)$ is the trajectory from the test dataset with the same initial sate as $\vect{x}_{\theta}(t)$.

The networks are trained with Adam optimizer~\cite{kingma2014adam} using mini-batches of size $5000$, while the learning rate exponentially decays over the training steps.

\subsection{ODE systems with known quasipotentials}\label{example12}
First, we consider two low-dimensional systems: one with two stable equilibrium points and the other with a limit cycle. The quasipotentials are known in these two examples, and we use these exact quasipotentials to benchmark the proposed method.
~\\~\\
\noindent\textit{Example 1.} We consider the following system in three-dimensional space~\cite{yang2019computing},
\begin{equation}\label{example1_ODE}
\begin{aligned}
\frac{dx}{dt} &= -2(x^3-x)-(y+z),\\
\frac{dy}{dt} &= -y+2(x^3-x),\\
\frac{dz}{dt} &= -z+2(x^3-x),\\
\end{aligned}
\end{equation}
where the state of the system is $\vect{x}=(x,y,z)^T$. This system has two stable equilibrium points, one at $\vect{x}_a=(-1,0,0)$ and the other at $\vect{x}_b=(1,0,0)$ and one unstable equilibrium point at $\vect{x}_c=(0,0,0)$. In the basins of the two stable equilibrium points, the quasipotential is known and given by
\begin{equation}\label{example1_QP}
U(x,y,z) = (1-x^2)^2+y^2+z^2.
\end{equation}

We generate $2000$ trajectories by solving the equations in~\eqref{example1_ODE} starting from initial states sampled from the uniform distribution on the domain $\mathcal{D}=[-2,2]\times[-1.5,1.5]^2$. Along each trajectory, we collect $100$ data points. In total, $X$ contains $2\times10^5$ data points. Out of these data points, $8571$ representative data points are used to impose the orthogonality condition.

The test dataset contains $200$ trajectories. To quantify the accuracy of the predicted long-term dynamics, we solve the learned dynamics starting from the initial states of these trajectories and compute the error in~\eqref{error} for each trajectory. Fig.~\ref{fig1} (lower panel) shows the comparison of three trajectories of the learned dynamics with those of the original dynamics in the test dataset. These errors have the mean $5.069\times 10^{-4}$ and the standard deviation $1.565\times 10^{-3}$.

The learned quasipotential is given by $U_{\theta}(\vect{x})=2V_{\theta}(\vect{x})-C$, where the constant $C$ is such that the minimum of $U_{\theta}(\vect{x})$ on the domain $\mathcal{D}$ equals zero. Fig.~\ref{fig1} (upper panel) shows the comparison of $U_{\theta}(\vect{x})$ with the exact quasipotential in~\eqref{example1_QP}. To quantify the accuracy of the learned quasipotential, we compute the relative root mean square error (rRMSE) and the relative mean absolute error (rMAE), 
% \begin{equation}
% \begin{aligned}
% \text{rRMSE} &= \left \sqrt{\sum_{i=1}^L\left(U(\vect{x}_i)-U_{\theta}(\vect{x}_i)\right)^2} \middle/ \sqrt{\sum_{i=1}^L U^2(\vect{x}_i)} \right , \\
% \text{rMAE} &= \left \sum_{i=1}^L\left\lvert U(\vect{x}_i)-U_{\theta}(\vect{x}_i)\right\rvert \middle/ \sum_{i=1}^L\left\lvert U(\vect{x}_i)\right\rvert \right , \\
% \end{aligned}
% \end{equation}
\begin{equation}\label{rRMSE_rMAE}
\begin{aligned}
\text{rRMSE} =
\frac{
   \sqrt{\sum_{i=1}^L\left(U(\vect{x}_i)-U_{\theta}(\vect{x}_i)\right)^2}
}
{
    \sqrt{\sum_{i=1}^L U^2(\vect{x}_i)}
}, \quad 
\text{rMAE} =
\frac{
   \sum_{i=1}^L\left\lvert U(\vect{x}_i)-U_{\theta}(\vect{x}_i)\right\rvert
}
{
   \sum_{i=1}^L\left\lvert U(\vect{x}_i)\right\rvert
}, 
\end{aligned}
\end{equation}
where $\{\vect{x}_i\}_{i=1}^L$ are the grid points of the uniform mesh on $\mathcal{D}$. The rRMSE and rMAE for the learned quasipotential are $0.0037$ and $0.0017$, respectively. The errors are computed with $L=10^6$.

\begin{figure}[t!]
	\centering
	\includegraphics[width=\linewidth]{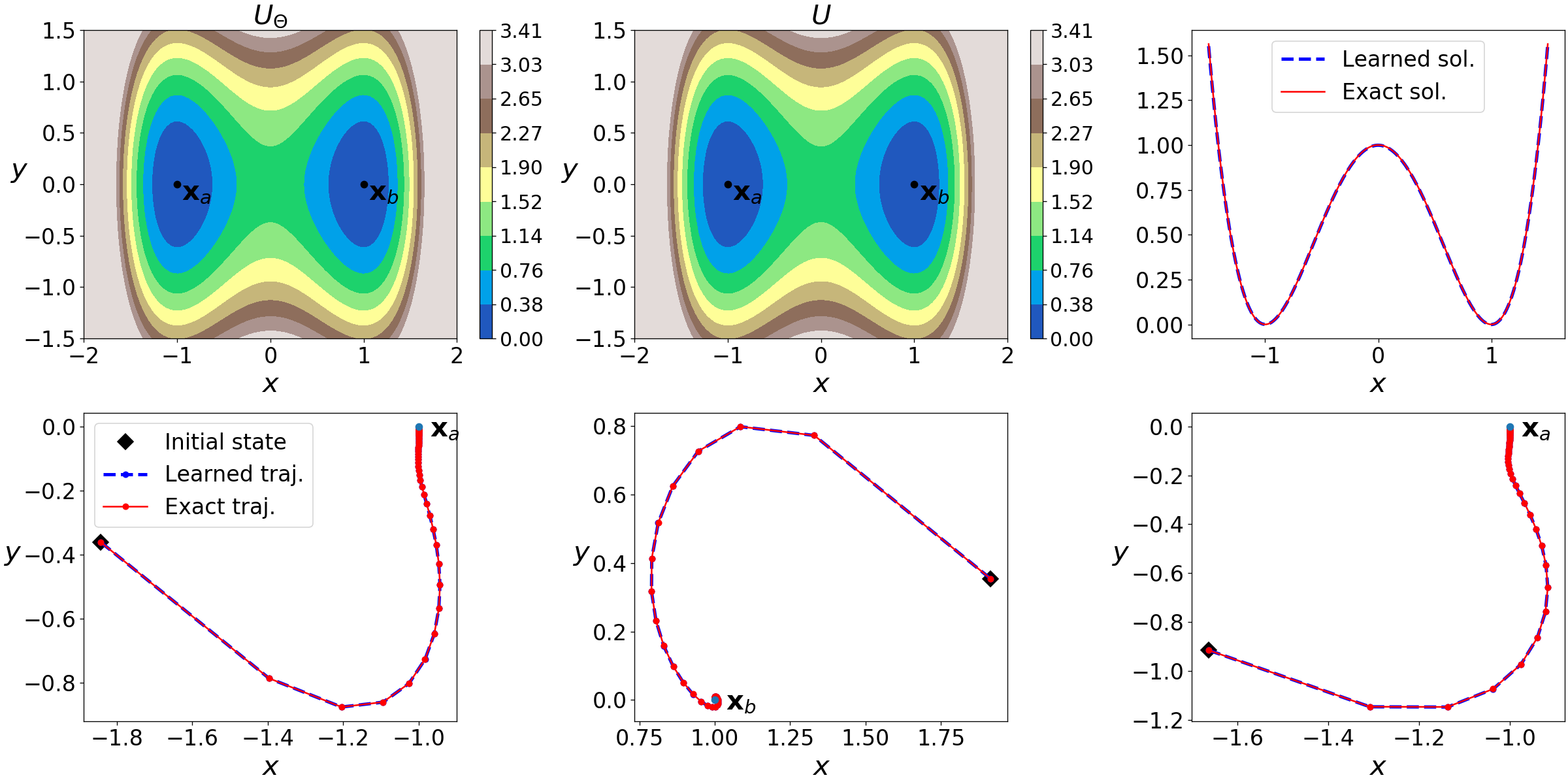}
	\caption{\ (Example 1): \textit{Upper Panel}: Contour plots of the learned quasipotential $U_{\theta}$ (left) and exact quasipotential $U$ (middle) projected onto the $xy$ plane with $z=0$, and plot of the learned quasipotential along the line $y=z=0$ (right). 
	\textit{Lower Panel}: Comparison of trajectories of the learned dynamics and the original dynamics~\eqref{example1_ODE} from different initial states.}
	\label{fig1}
\end{figure}
~\\
\noindent\textit{Example 2.} We consider the system with the quasipotential
\begin{equation}\label{example2_QP}
%U(x,y) = \tilde{U}(x-a,y-b)
U(x,y) = \left((x-a)^2+(x-a)(y-b)+(y-b)^2-\frac{1}{2}\right)^2,\quad (x,y)\in R^2,
\end{equation}
where $a$, $b$ are two parameters. The function $U$ attains its local maximum at the point $(a,b)$ and attains its minimum on the ellipse
\begin{equation}\label{limitcycle}
\left\{(x,y)\in R^2:(x-a)^2+(x-a)(y-b)+(y-b)^2=\frac{1}{2}\right\}.
\end{equation}
The dynamics for the system is governed by
\begin{equation}\label{example2_ODE}
\begin{aligned}
\frac{dx}{dt} &= -\frac{1}{2} \frac{\partial U}{\partial x}(x,y)-2 \left( x+2y-a-2b\right), \\
\frac{dy}{dt} &= -\frac{1}{2} \frac{\partial U}{\partial y}(x,y)+2 \left( 2x+y-2a-b\right),
\end{aligned}
\end{equation}
where the state of the system is $\vect{x}=(x,y)^T$. This dynamical system has a stable limit cycle on the ellipse in~\eqref{limitcycle} and an unstable equilibrium point at $(a,b)$ inside the limit cycle. 

We take $a=1$, $b=2.5$ and generate $2000$ trajectories by solving the equations in~\eqref{example2_ODE} starting from initial states sampled from the uniform distribution on the domain $\mathcal{D} = [-0.5,2.5]\times[1,4]$. Along each trajectory, we collect $100$ data points. In total, $X$ contains $2\times 10^5$ data points. Out of these data points, $3712$ representative data points are used to impose the orthogonality condition.

Fig.~\ref{fig2} (lower panel) shows a comparison of one trajectory of the learned dynamics with that of the original dynamics in the test dataset. The statistics (mean $\pm$ deviation) of the errors of $200$ trajectories is $4.797\times 10^{-4} \pm 2.923\times 10^{-4}$.
A comparison of the learned quasipotential with the exact quasipotential in~\eqref{example2_QP} is shown in Fig.~\ref{fig2} (upper panel).
The rRMSE and rMAE for the learned quasipotential on the domain $\mathcal{D}$ are $0.0141$ and $0.0090$, respectively. The errors are computed using Eqs.~\eqref{rRMSE_rMAE} with $L=10^4$.

\begin{figure}[t!]
	\centering
	\includegraphics[width=\linewidth]{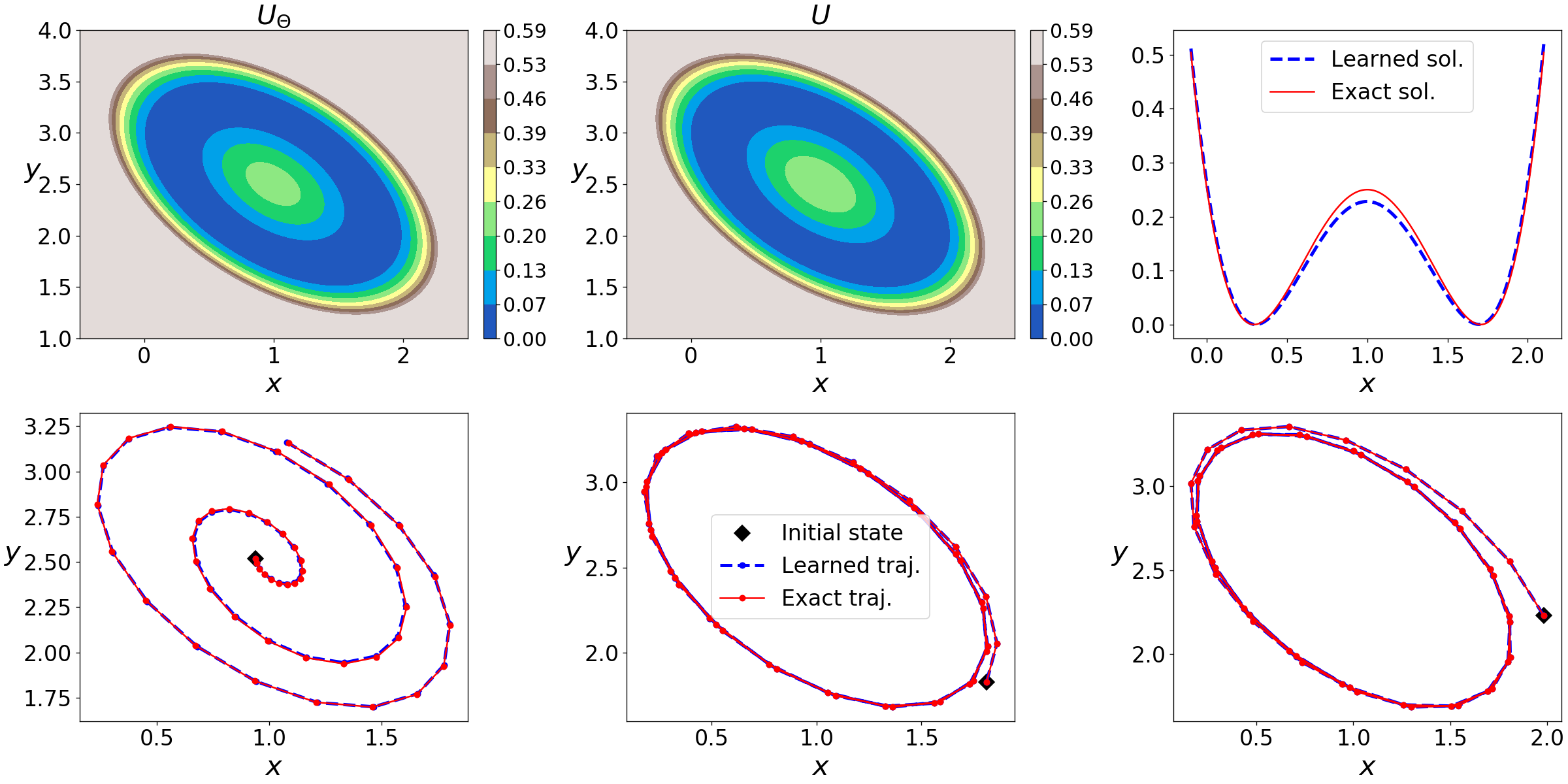}
	\caption{\ (Example 2): \textit{Upper Panel}: Contour plots of the learned quasipotential $U_{\theta}$ (left) and exact quasipotential $U$ (middle) and plot of the learned quasipotential along the line $y=b$ (right). 
	\textit{Lower Panel}: Comparison of trajectories of the learned dynamics and the original dynamics~\eqref{example2_ODE} from different initial states.}
	\label{fig2}
\end{figure}

\subsection{Biological system: budding yeast cell cycle}\label{example3}
The previous two examples are toy problems where the exact quasipotential is known. Now, we test our method on a more challenging problem where computing quasipotential landscapes using traditional methods may be very expensive.
~\\~\\
\noindent\textit{Example 3.} 
We study the robustness of the reproduction process of a budding yeast cell cycle by constructing the quasipotential~\cite{lv2015energy}. 
The simplified network of yeast cell is composed of three modules: the $G1/S$ module, the early $M$ module and the late $M$ module.
Based on the feedback of each module and the interactions between different modules, the following dynamics has been proposed for the cell cycle
\begin{equation}\label{example3_ODE}
\begin{aligned}
\frac{dx}{dt} &= \frac{x^2}{j_1^2+x^2}-k_1 x-xy+a_0,\\
\frac{dy}{dt} &= \frac{y^2}{j_2^2+y^2}-k_2 y-yz+k_{a1}x,\\
\frac{dz}{dt} &= \frac{k_sz^2}{j_3^2+z^2}-k_3 z-k_izx+k_{a2}y,\\
\end{aligned}
\end{equation}
where $x$, $y$, $z$ represent the concentration of certain key regulators in the $G1/S$, early $M$ and late $M/G1$ phase, respectively. The values for the parameters $j_1$,$j_2$,$j_3$,$k_1$,$k_2$,$k_3$,$k_i$,$k_s$,$k_{a1}$,$k_{a2}$,$a_0$ are taken from Ref.~\cite{lv2015energy}. The dynamics has a stable equilibrium state $G1$ approximately at $(0,0,z_{max})$ where $z_{max}=4.342$. 
The yeast cell cycle is termed a \emph{robust process} in~\cite{lv2015energy}, in the sense that most transition paths stay close to a particular pathway due to the dynamical landscape.
This pathway starts from the excited $G1$ state and ends at the stable $G1$ state by going through the $S$ phase approximately at $(x_{max},0,0)$ where $x_{max}=4.335$ and the early $M$ state approximately at $(0,y_{max},0)$ where $y_{max}=4.353$.

We generate $10^4$ trajectories by solving the equations in~\eqref{example3_ODE} starting from initial states sampled from the uniform distribution on the set
\begin{equation}
\{\vect{x}=(x,y,z)\in [0,5]^3: \left\lVert \vect{f}(\vect{x})\right\rVert_{\infty}<5\},
\end{equation}
where the notation $\left\lVert\vect{y}\right\rVert_{\infty}$ denotes the maximum of absolute values of the 
components in the vector $\vect{y}$. The last condition excludes states far away from the regions of interest corresponding to the transition events. Along each trajectory, we collect $100$ data points. In total, $X$ contains $10^6$ data points. Out of these data points, $8384$ representative data points are used to impose the orthogonality condition.

Fig.~\ref{fig3} (lower panel) shows a comparison of one trajectory of the learned dynamics and that of the original dynamics in the test dataset. The statistics (mean $\pm$ deviation) of the errors of the $1000$ trajectories is $0.161\pm 0.226$.
The cross-sections of the learned quasipotential at $z=0$ and $x=0$ are shown in Fig.~\ref{fig3} (upper panel). The quasipotential characterizes the robust process of the cell cycle, which agrees well with the result in Ref.~\cite{lv2015energy} using the geometric minimum action method. Moreover, notice that the quasipotential we compute can be evaluated at arbitrary points in space (in the regions explored by the sampled data) and is not limited by any meshes, or choice of beginning and end points for path-based methods.

\begin{figure}[t!]
	\centering
	\includegraphics[width=.8\linewidth]{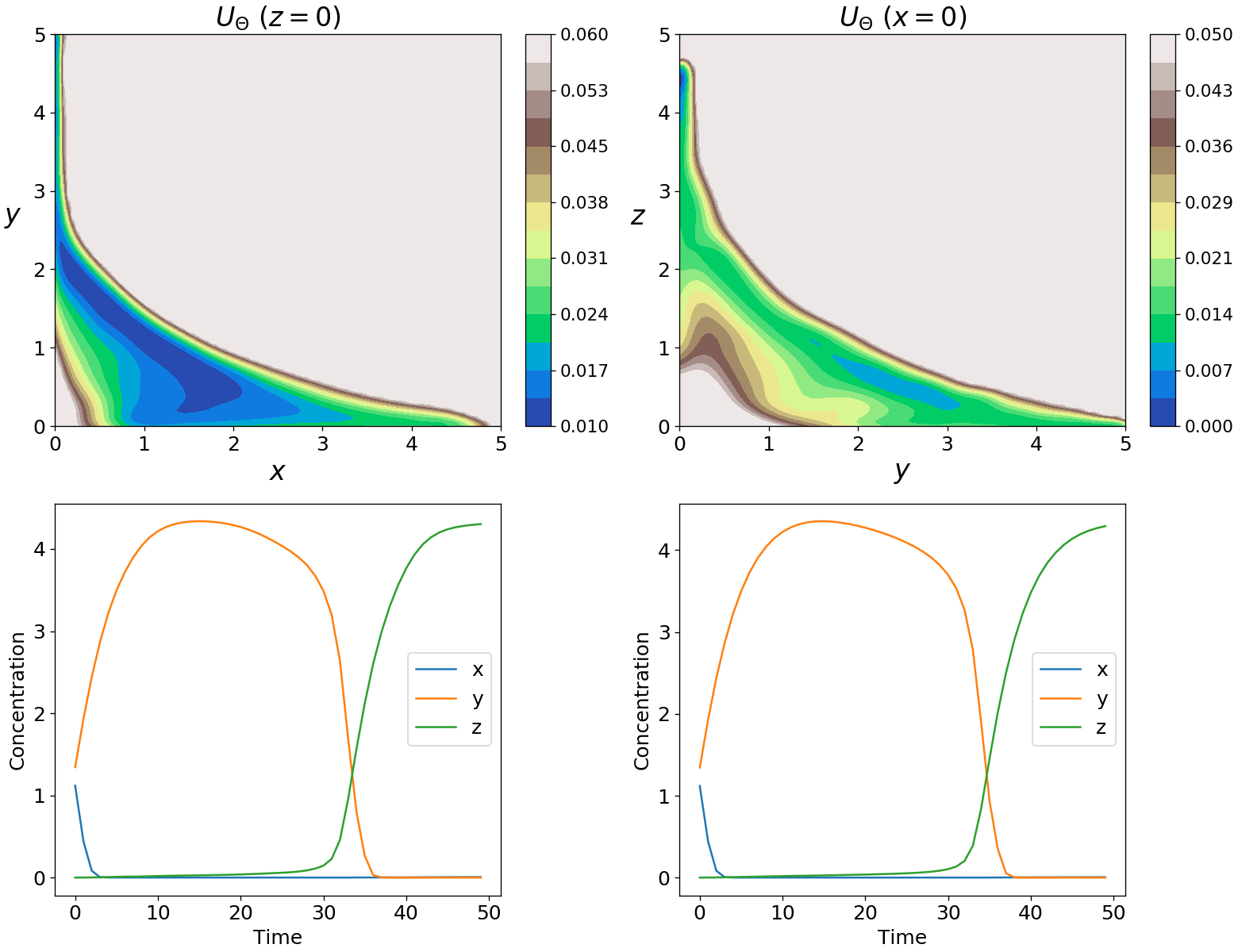}
	\caption{\ (Example 3): \textit{Upper Panel}: Contour plots of the quasipotential projected onto the $xy$-plane with $z=0$ (left) and the $yz$-plane with $x=0$ (right). \textit{Lower Panel}: Comparison of one trajectory of the learned dynamics (left) and the original dynamics in~\eqref{example3_ODE} (right).
		}
	\label{fig3}
\end{figure}

\subsection{High-dimensional systems: discretized PDEs}\label{example45}
We next apply the proposed method to two high-dimensional systems which are obtained from the discretization of PDEs. After discretization, the first system is a gradient system in the $50$-dimensional space with known quasipotential, and the second one is a non-gradient system in the $40$-dimensional space.
~\\~\\
\noindent\textit{Example 4.} We consider the Ginzburg-Landau equation
\begin{equation}\label{GL}
u_t = \delta u_{xx} -\delta^{-1} V'(u),\quad x\in[0,1],
\end{equation}
with the boundary conditions $u(0,t)=u(1,t)=0$ and the initial condition $u(x,0)=u^0(x)$, where $V(u)=\frac{1}{4}(1-u^2)^2$ is the double-well potential and $\delta$ is a small parameter. The equation is a gradient flow associated with the energy
\begin{equation}\label{E4_cont}
E[u] = \int_{0}^1 \left(\frac{1}{2}\delta u_x^2+\delta^{-1} V(u)\right) dx.
\end{equation}

We partition the interval $[0,1]$ using $I+1$ grid points $x_0$,...,$x_I$, where $x_i=ih$ and $h=1/I$.
Then we approximate the spatial derivatives in Eq.~\eqref{GL} using the central finite difference and obtain the following system of ODEs
\begin{equation}\label{disct_GL}
\frac{d u_i}{dt} = \delta \frac{u_{i-1}-2u_i+u_{i+1}}{h^2}-\delta^{-1} V'(u_i),\quad 1\leq i\leq I-1,
\end{equation}
with $u_0=u_I=0$ and the initial condition $u_i(0)=u^0(x_i)$ for $1\leq i\leq I-1$, where $u_i$ denotes the approximate solution at the grid point $x_i$. The state of the system is denoted by $\vect{u}=(u_1,\dots,u_{I-1})$.
The ODE system is a gradient flow associated with the energy
\begin{equation}\label{disct_E}
E_h[\vect{u}] = \sum_{i=1}^I\frac{1}{2}\delta\left(\frac{u_i-u_{i-1}}{h}\right)^2+\delta^{-1}V(u_i),
\end{equation}
which is a discretization of the energy~\eqref{E4_cont}, up to the factor $h$.
The dynamics~\eqref{disct_GL} has two stable states at the two local minima $\vect{u}_{\pm}$ of the energy~\eqref{disct_E}, which are shown in Fig.~\ref{fig5} (last column) for $\delta=0.1$. The quasipotential with respect to the two stable states is 
\begin{equation}\label{QP_example4}
    U(\vect{u})=2E_h[\vect{u}] + C
\end{equation} 
in the basins of attraction, where $C$ is constant.

\begin{figure}[t!]
	\centering
	\includegraphics[width=\linewidth]{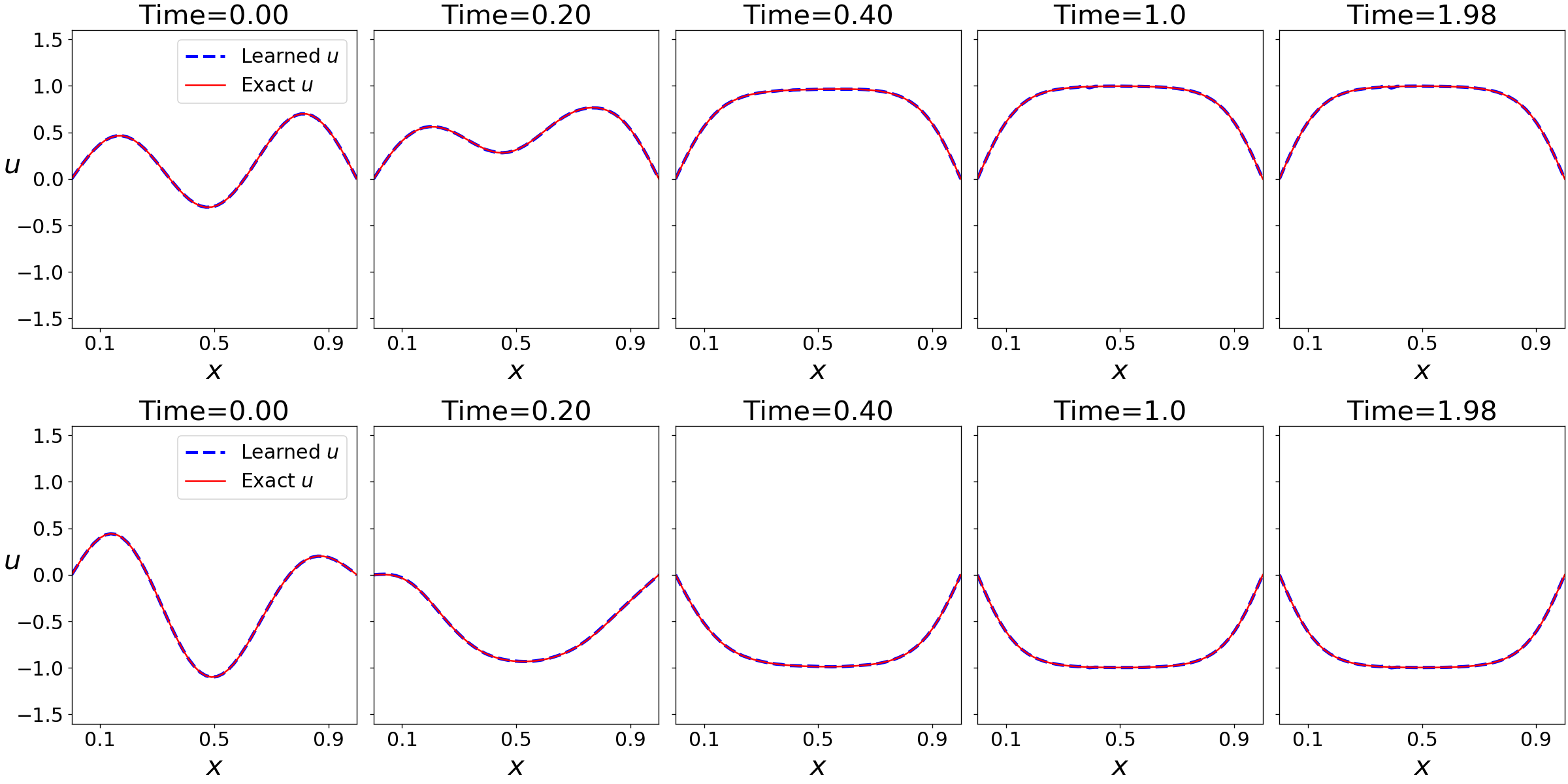}
	\caption{\ (Example 4): Comparison of trajectories of the learned dynamics and the original dynamics~\eqref{disct_GL} from different initial states.}
	\label{fig5}
\end{figure}

The number of discretization points is taken as $I=51$. We generate $10^4$ trajectories by solving the dynamics in~\eqref{disct_GL} starting from the initial states:
\begin{equation}
\begin{aligned}
u^0(x) = \frac{a\cdot\tilde{u}(x)}{\max_y \lvert \tilde{u}(y)\rvert},
\end{aligned}
\end{equation}
where $\tilde{u}(x)=\sum_{k=1}^{4}\hat{u}_k\sin(k\pi x)$ and $\{\hat{u}_k\}_{k=1}^4$, $a$ are drawn from the uniform distributions: $u_k\sim\mathcal{U}\left(-1,1\right)$, $a\sim\mathcal{U}\left(0,\frac{3}{2}\right)$.
Along each trajectory, we collect $200$ data points. In total, $X$ contains $2\times 10^6$ data points. Out of these data points, $76044$ representative data points are used to impose the orthogonality condition.

Fig.~\ref{fig5} shows a comparison of two trajectories of the learned dynamics and those of the original dynamics in the test dataset. The statistics (mean $\pm$ deviation) of the errors of the $1000$ trajectories is $1.220\times 10^{-2} \pm 7.734\times 10^{-2}$.
To assess the accuracy of the learned quasipotential, we compare $U_{\theta}$ and $U$ in~\eqref{QP_example4} along the minimum energy path (MEP) from $\vect{u}_{-}$ to $\vect{u}_{+}$. The MEP is computed using the string method~\cite{weinan2007simplified}. The comparison is shown in Fig.~\ref{fig6}, from which a good agreement can be observed. In particular, the learned quasipotential accurately captures the energy barrier between the two stable states.

\begin{figure}[t!]
	\centering
	\includegraphics[width=.5\linewidth]{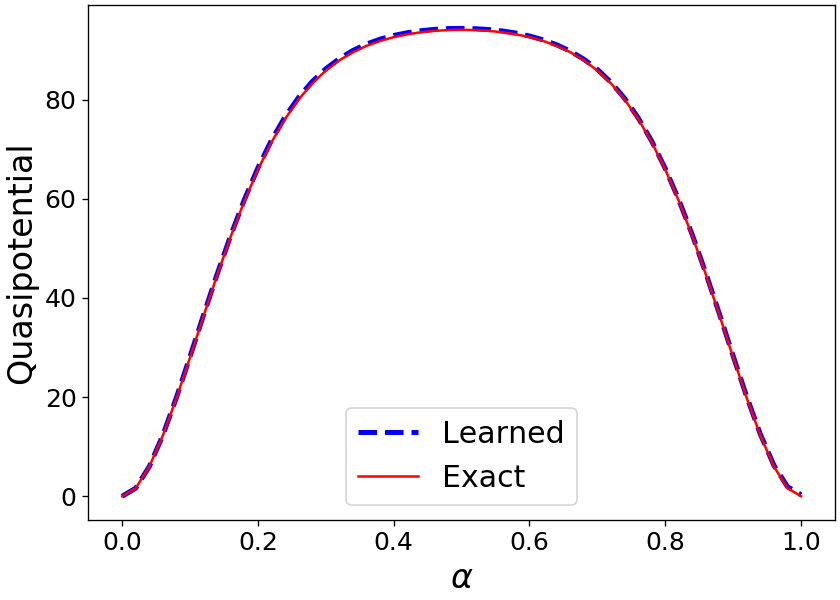}
	\caption{\ (Example 4): Comparison of the learned and exact quasipotentials for the discretized Ginzburg-Landau equation along the MEP, where $\alpha$ is the normalized arc-length parameter along the MEP.}
	\label{fig6}
\end{figure}
~\\
\noindent\textit{Example 5.} We consider the dynamics of the Brusselator on the spatial interval $[0,1]$,
\begin{equation}\label{Brus_PDE}
\begin{aligned}
u_t &= \frac{1}{\alpha}\left(u_{xx}+1+u^2v-(1+A)u\right),\\
v_t &= v_{xx}+Au-u^2v ,
\end{aligned}
\end{equation}
with the Neumann boundary conditions $u_x(0,t) = u_x(1,t) = 0$, $v_x(0,t) = v_x(1,t) = 0$,
% \begin{equation}
% u_x(0,t) = u_x(1,t) = 0,\quad v_x(0,t) = v_x(1,t) = 0,
% \end{equation}
and the initial condition $u(x,0)=u^0(x)$, $v(x,0)=v^0(x)$, 
where $\alpha$, $A$ are parameters. We discretize the interval $[0,1]$ with grid points $x_0,\dots,x_I$, where $x_i=ih$ and $h=1/I$. Then we approximate the spatial derivatives in Eq.~\eqref{Brus_PDE} using the central finite difference and obtain the following system of ODEs
\begin{equation}\label{disct_Bruss}
\begin{aligned}
\frac{d u_i}{dt} &= \frac{1}{\alpha}\left(\frac{u_{i-1}-2u_i+u_{i+1}}{h^2}+1+u_i^2v_i-(1+A)u_i\right),\\
\frac{d v_i}{dt} &= \frac{v_{i-1}-2v_i+v_{i+1}}{h^2}+Au_i-u_i^2v_i,\\
\end{aligned}
\end{equation}
for $0\leq i\leq I$, with the Neumann boundary conditions imposed by $u_{-1}=u_1$, $u_{I+1}=u_{I-1}$, $v_{-1}=v_1$, $v_{I+1}=v_{I-1}$,
% \begin{equation}
% \begin{aligned}
% u_{-1}&=u_1,\ u_{I+1}=u_{I-1},\\
% v_{-1}&=v_1,\ v_{I+1}=v_{I-1},
% \end{aligned}
% \end{equation}
and the initial condition $u_i(0)=u^0(x_i)$, $v_i(0)=v^0(x_i)$ for $0\leq i\leq I$,
where $(u_i,v_i)$ denotes the solution of Eq.~\eqref{Brus_PDE} at $x_i$. The state of the system is denoted by $\vect{x}=(u_0,...,u_I,v_0,...,v_I)$. The dynamics has a stable state: $u_i=1$, $v_i=A$ for $0\leq i\leq I$.

We take $\alpha=0.1$ and $A=0.5$. The number of discretization points is taken as $I=19$, so the discretized system is in the $40$-dimensional space. 
We generate $2\times 10^4$ trajectories by solving the dynamics in~\eqref{disct_Bruss} starting from the initial states:
\begin{equation}
\begin{aligned}
u^0(x) = \frac{a_1\cdot\tilde{u}(x)}{\max_y \lvert \tilde{u}(y)\rvert} +a_2,\quad
v^0(x) = \frac{a_3\cdot\tilde{v}(x)}{\max_y \lvert \tilde{v}(y)\rvert} +a_4,
\end{aligned}
\end{equation}
% \begin{equation}
% \begin{aligned}
% u^0(x) &= \frac{a_1\cdot\tilde{u}(x)}{\max_y \lvert \tilde{u}(y)\rvert} +a_2,\\
% v^0(x) &= \frac{a_3\cdot\tilde{v}(x)}{\max_y \lvert \tilde{v}(y)\rvert} +a_4,\\
% \end{aligned}
% \end{equation}
where $\tilde{u}(x)=\sum_{k=0}^4 \hat{u}_k\cos(k\pi x)$, $\tilde{v}(x)=\sum_{k=0}^4\hat{v}_k\cos(k\pi x)$ and $\{\hat{u}_k\}_{k=0}^4$, $\{\hat{v}_k\}_{k=0}^4$, $a_1$, $a_2$, $a_3$, $a_4$ are drawn from the uniform distributions
\begin{equation}
\begin{aligned}
    \hat{u}_k&\sim \mathcal{U}\left(-1,1\right),\ 
    \hat{v}_k\sim \mathcal{U}\left(-1,1\right),\ k=0,\dots,4,\\
    a_1&\sim \mathcal{U}\left(0,\frac{1}{2}\right),\ 
    a_2\sim \mathcal{U}\left(\frac{1}{2}+a_1,\frac{3}{2}-a_1\right),\ 
    a_3\sim \mathcal{U}\left(0,\frac{1}{2}\right),\ 
    a_4\sim \mathcal{U}\left(a_3,1-a_3\right).
\end{aligned}
\end{equation}
Along each trajectory, we collect $200$ data points. In total, $X$ contains $4\times 10^6$ data points. Out of these data points, $26218$ representative data points are used to impose the orthogonality condition.

Fig.~\ref{fig7} (upper panel) shows a comparison of one trajectory of the learned dynamics and that of the original dynamics in the test dataset. The statistics (mean $\pm$ deviation) of the errors of the $2000$ trajectories is $8.227\times 10^{-4} \pm 6.741\times 10^{-4}$.
The quasipotential is shown in Fig.~\ref{fig7} (lower panel) as a function of $(\hat{u}_0,\hat{v}_0)$, where $(\hat{u}_0,\hat{v}_0)$ corresponds to the state $u(x)\equiv\hat{u}_0$, $v(x)\equiv\hat{v}_0$ (left), and as a function of $(\hat{u}_1,\hat{v}_1)$, where $(\hat{u}_1,\hat{v}_1)$ corresponds to the state $u(x)=1+\hat{u}_1 \cos(\pi x)$, $v(x)=0.5+\hat{v}_1 \cos(\pi x)$ (right).
The numerical results agree well with those computed using the minimum action method~\cite{weinan2004minimum}.

\begin{figure}[t!]
	\centering
	\includegraphics[width=\linewidth]{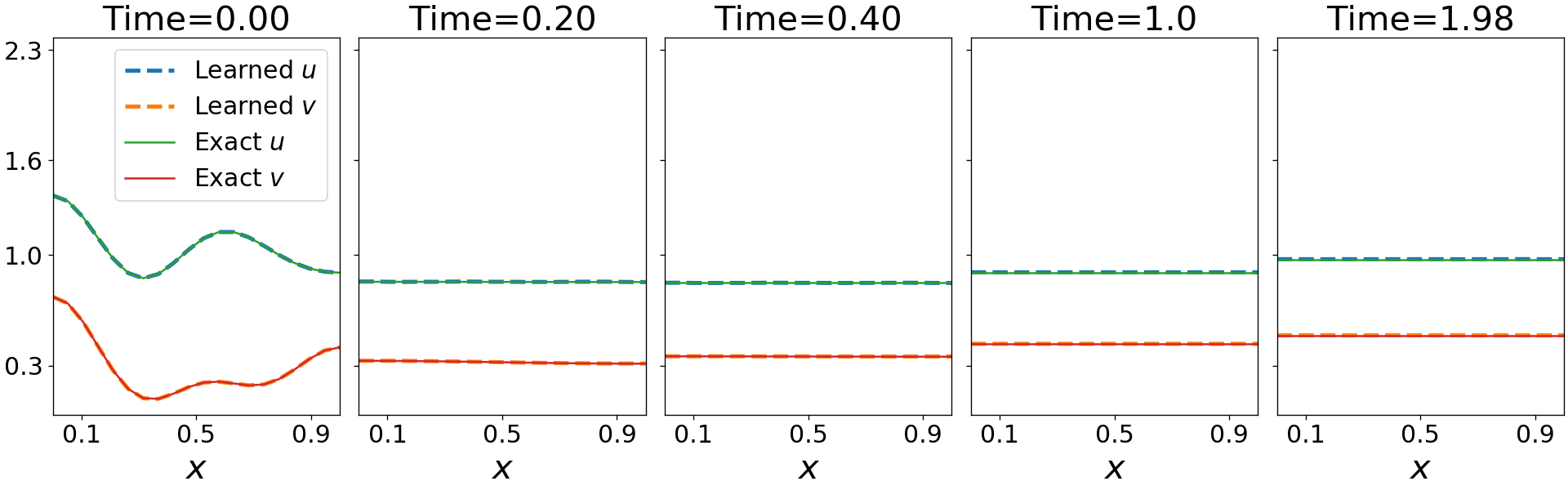}
	\includegraphics[width=0.8125\linewidth]{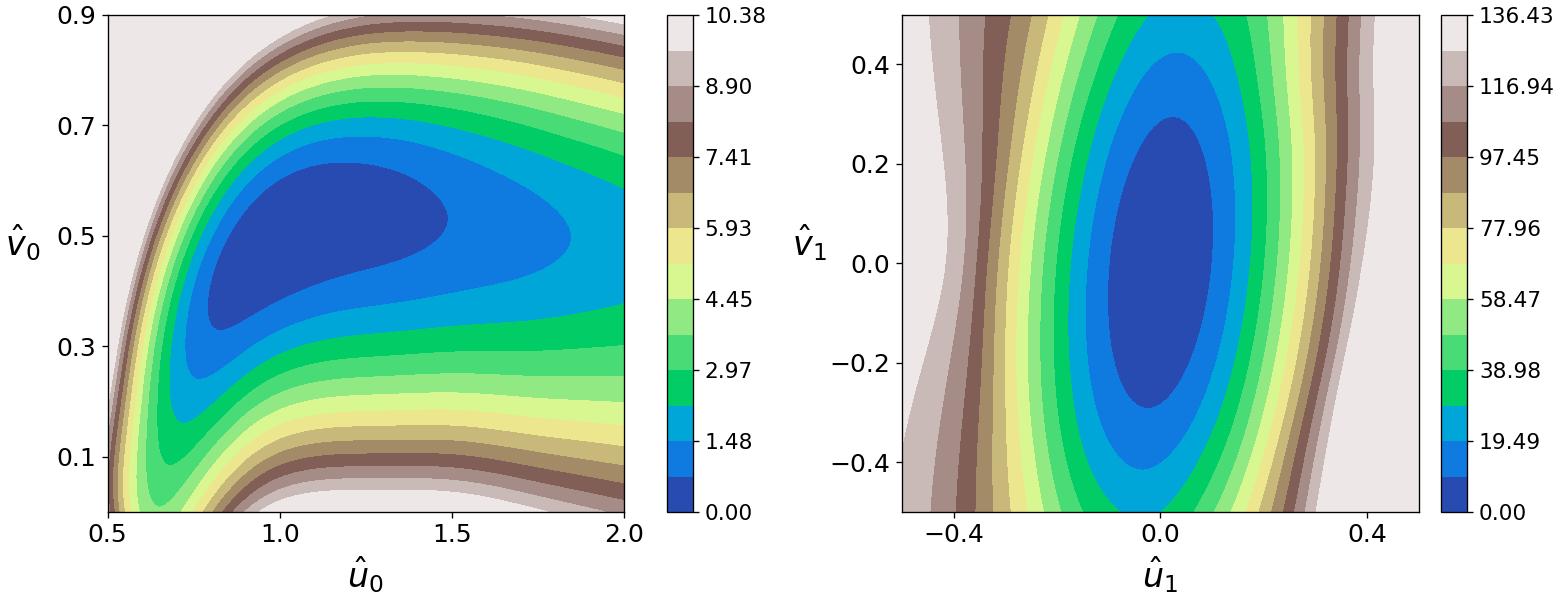}
	\caption{\ (Example 5): \textit{Upper Panel}:  Comparison of one trajectory of the learned dynamics and the original dynamics in~\eqref{disct_Bruss}. 
	\textit{Lower Panel}: Contour plots of the quasipotential as a function of $(\hat{u}_0,\hat{v}_0)$ for which the state is $u(x)\equiv\hat{u}_0$, $v(x)\equiv\hat{v}_0$ (left), and as a function of $(\hat{u}_1,\hat{v}_1)$ for which the state is $u(x)=1+\hat{u}_1 \cos(\pi x)$, $v(x)=0.5+\hat{v}_1 \cos(\pi x)$ (right).}
	\label{fig7}
\end{figure}

\section{Conclusion}\label{Conclusion}
In this paper, we proposed a method for computing the quasipotential for dynamical systems and at the same time learning the dynamics from the trajectory data.
This method is based on learning an orthogonal decomposition of the force field into potential and rotational components, each parameterized by a neural network. The neural networks are trained by minimizing a loss function composed of two parts: one is to reconstruct the dynamics and the other one is to impose the orthogonality condition between the potential and rotational components. The quasipotential associated with each attractor of the dynamical system can be obtained by confining the potential component to the corresponding basin of attraction.
We successfully applied the method to various examples including systems with stable equilibrium points, limit cycles and systems in high dimensions. 
The method is purely data driven in the sense that no explicit form of the dynamical system is required; in fact, an explicit model for the dynamics is learned from the observed trajectories in this method.
To the best of our knowledge, this is the first efficient and accurate method that can be used to map the landscape of the quasipotential in high dimensions.

After we obtain the quasipotential, we can compute other interesting objects associated with the dynamical system perturbed by small noise. For example, we can identify the minimum action path between the attractor $A$ and another state. Using the fact that the tangent of the path is parallel to $\vect{f}+\nabla U_A$ along the minimum action path, the path can be computed using the string method. The expected exit time from the basin of attraction can also be estimated using the minimum value of the quasipotential on the boundary of the basin of attraction.

In the current work, we demonstrated the effectiveness of the proposed method using examples with different features.
In the future, we plan to apply the method to problems of practical interest such as dynamical systems in fluid mechanics and biological systems.

\section*{Acknowledgements}
The work of Ren was supported in part by Singapore MOE AcRF grant R-146-
000-267-114, and the NSFC grant (No. 11871365). The work of QL was supported by the start-up grant at the National University of Singapore, under the PYP programme.


\begin{thebibliography}{99}

\bibitem{freidlin2012random}
M. I. Freidlin and A. D. Wentzell, ``Random Perturbations of Dynamical Systems,'' 3rd Ed, Springer Press (2012).

\bibitem{weinan2004minimum}%
W. E, W. Ren, and E. Vanden-Eijnden, ``Minimum action method for the study of rare events,'' Commun. Pure Appl. Math. {\bf 57}, 637–656 (2004).

\bibitem{weinan2007simplified}
W. E, W. Ren, and E. Vanden-Eijnden, ``Simplified and improved string method for computing the minimum energy paths in barrier-crossing events,'' J. Chem. Phys. {\bf 126}, 164103 (2007).

\bibitem{zhou2008adaptive}
X. Zhou, W. Ren, and W. E, ``Adaptive minimum action method for the study of rare events,'' J. Chem. Phys. {\bf 128}, 104111 (2008).

\bibitem{heymann2008geometric}
M. Heymann and E. Vanden-Eijnden, ``The geometric minimum action method: A least action principle on the space of curves,'' Commun. Pure Appl. Math. {\bf 61}, 1052-1117 (2008).

\bibitem{cameron2012finding}
M. K, Cameron, ``Finding the quasipotential for nongradient SDEs,'' Physica D {\bf 241}, 1532-1550 (2012).

\bibitem{dahiya2018ordered}
D. Dahiya and M. Cameron, ``Ordered line integral methods for computing the quasi-potential,'' J. Scientific Computing {\bf 75}, 1351-1384 (2018).

\bibitem{dahiya2018ordered2}
D. Dahiya and M. Cameron, ``An ordered line integral method for computing the quasi-potential in the case of variable anisotropic diffusion,'' Physica D {\bf 382}, 33-45 (2018).

\bibitem{zhou2016construction}
P. Zhou and T. Li, ``Construction of the landscape for multi-stable systems: Potential landscape, quasipotential, A-type integral and beyond,'' J. Chem. Phys. {\bf 144}, 094109 (2016).

\bibitem{lv2015energy}
C. Lv, X. Li, F. Li, and T. Li, ``Energy landscape reveals that the budding yeast cell cycle is a robust and adaptive multi-stage process,'' PLoS Comput. Biol. {\bf 11}, e1004156 (2015).

\bibitem{li2018landscape}
C. Li and G. Balazsi, ``A landscape view on the interplay between EMT and cancer metastasis,'' NPJ Syst. Biol. Appl. {\bf 4}, 1-9 (2018).

\bibitem{wang2010potential}
J. Wang, C. Li, and E. Wang, ``Potential and flux landscapes quantify the stability and robustness of budding yeast cell cycle network,'' Proc. Natl. Acad. Sci. {\bf 107}, 8195-8200 (2010).

\bibitem{bouchet2016perturbative}
F. Bouchet, K. Gawedzki, and C. Nardini, ``Perturbative calculation of quasi-potential in non-equilibrium diffusions: a mean-field example,'' J. Stat. Phys. {\bf 163}, 1157-1210 (2016).

\bibitem{nolting2016balls}
B. C. Nolting and K. C. Abbott, ``Balls, cups, and quasi‐potentials: quantifying stability in stochastic systems,'' Ecology {\bf 97}, 850-864 (2016).

\bibitem{lv2014constructing}
C. Lv, X. Li, F. Li, and T. Li, ``Constructing the energy landscape for genetic switching system driven by intrinsic noise,'' PLoS One {\bf 9}, e88167 (2014).

\bibitem{yang2019computing}
S. Yang, F. P. Samuel, and K. C. Maria, ``Computing the quasipotential for nongradient SDEs in 3D,'' J. Comput. Phys. {\bf 379}, 325-350 (2019).

\bibitem{lasalle1960some}
J. LaSalle, ``Some extensions of Liapunov's second method,'' IRE Trans. Circuit Theory {\bf 7}, 520-527 (1960).

\bibitem{yao2007modeling}
C. Yao and E. M. Bollt, ``Modeling and nonlinear parameter estimation with Kronecker product representation for coupled oscillators and spatiotemporal systems,'' Physica D {\bf 227}, 78-99 (2007).

\bibitem{brunton2016discovering}
S. L. Brunton, J. L. Proctor, and J. N. Kutz, ``Discovering governing equations from data by sparse identification of nonlinear dynamical systems,'' Proc. Natl. Acad. Sci. {\bf 113}, 3932-3937 (2016).

\bibitem{raissi2018hidden}
M. Raissi and G. E. Karniadakis, ``Hidden physics models: Machine learning of nonlinear partial differential equations,'' J. Comput. Phys. {\bf 357}, 125-141 (2018).
    
\bibitem{long2019pde}
Z. Long, Y. Lu, and B. Dong, ``PDE-Net 2.0: Learning PDEs from data with a numeric-symbolic hybrid deep network,'' J. Comput. Phys. {\bf 399}, 108925 (2019).

\bibitem{raissi2018multistep}
M. Raissi, and P. Perdikaris, and G. E. Karniadakis, ``Multistep neural networks for data-driven discovery of nonlinear dynamical systems,'' arXiv preprint arXiv:1801.01236 (2018).

\bibitem{raissi2018deep}
M. Raissi, ``Deep hidden physics models: Deep learning of nonlinear partial differential equations,'' J. Mach. Learn. Res. {\bf 19}, 932-955 (2018).

\bibitem{yu2020onsagernet}
H. Yu, X. Tian, and Q. Li, ``OnsagerNet: Learning Stable and Interpretable Dynamics using a Generalized Onsager Principle,'' arXiv preprint arXiv:2009.02327 (2020).

\bibitem{kolter2019learning}
G. Manek and J. Z. Kolter, ``Learning stable deep dynamics models,'' arXiv preprint arXiv:2001.06116 (2020).

\bibitem{li2019better}
B. Li, S. Tang, and H. Yu, ``Better approximations of high dimensional smooth functions by deep neural networks with rectified power units,'' arXiv preprint arXiv:1903.05858 (2019).

\bibitem{kingma2014adam}
D. P. Kingma and J. Ba, ``Adam: A method for stochastic optimization,'' in Proceedings of International Conference on Learning Representations (ICLR), San Diego (2015).

\end{thebibliography}
\end{document}